\documentclass{compositio}

\title{Exact functors on perverse coherent sheaves}
\author{Clemens Koppensteiner}
\email{clemens@math.northwestern.edu}
\address{Department of Mathematics, Northwestern University, 2033 Sheridan Rd, Evanston, IL 60208, USA}
\classification{14F05 (primary), 18E30, 14B15 (secondary)}
\keywords{perverse coherent sheaves, local cohomology}

\usepackage[T1]{fontenc}
\usepackage[utf8]{inputenc}

\usepackage{csquotes}
\usepackage{amsmath,amsfonts,amssymb,amsthm, mathtools}
\usepackage[backend=biber,
    maxnames=100,
    style=alphabetic
    ]{biblatex}
\usepackage{microtype}

\usepackage[colorlinks=false,unicode=true]{hyperref}

\theoremstyle{plain}
\newtheorem{Thm}{Theorem}
\newtheorem{Prop}[Thm]{Proposition}
\newtheorem{Lem}[Thm]{Lemma}

\theoremstyle{definition}
\newtheorem{Def}[Thm]{Definition}

\theoremstyle{remark}
\newtheorem{Rem}[Thm]{Remark}
\newtheorem{Ex}[Thm]{Example}

\addbibresource{math.bib}

\newcommand\sheaf{\mathcal}
\newcommand\liealg[1]{\mathfrak{#1}}
\newcommand\liesl[1]{\liealg{sl}_#1}
\newcommand\SL[1]{\mathrm{SL}_#1}
\newcommand\sO{\sheaf{O}}
\newcommand\cat{\mathbf}
\newcommand\catModules[1]{#1\text{-}\cat{Mod}}
\newcommand\catCoh[2][]{\cat{Coh}^{#1}(#2)}
\newcommand\sheafHom{\underline{\operatorname{Hom}}}
\newcommand\supp{\operatorname{supp}}
\newcommand\res[2]{\mathchoice{\left.#1\right|_{#2}}{#1|_{#2}}{#1|_{#2}}{#1|_{#2}}}
\newcommand\codim{\operatorname{codim}}
\DeclareMathOperator\Spec{Spec}
\newcommand\ideal\mathfrak
\newcommand\setset\mathfrak
\newcommand\perv[1][p]{\mkern1mu{}^{#1}\mkern-3mu}
\newcommand\dualize{\mathbb D}
\newcommand\lc[1]{\Gamma_{\mkern-3mu#1}}
\newcommand\measuringFam{\mathfrak M}
\newcommand\Xtop[1][X]{#1^{G,\mathrm{top}}}

\makeatletter
\renewcommand\p@enumii{}
\renewcommand\theenumi{(\@roman\c@enumi)}
\makeatother

\begin{document}

\begin{abstract}
    Inspired by symplectic geometry and a microlocal characterizations of perverse (constructible) sheaves we consider an alternative definition of perverse coherent sheaves.
    We show that a coherent sheaf is perverse if and only if $R\lc Z\sheaf F$ is concentrated in degree $0$ for special subvarieties $Z$ of $X$.
    These subvarieties $Z$ are analogs of Lagrangians in the symplectic case.
\end{abstract}

\maketitle

\section{Introduction}

A general way to obtain insights about the heart of a t-structure is to study exact functors on the t-structure.
For example, for the category of constructible perverse sheaves on a complex manifold, one can obtain a large amount of exact functors by taking vanishing cycles \cite[Corollary~10.3.13]{KashiwaraSchapira:1994:SheavesOnManifolds}.

Let $\sheaf F$ be a constructible (middle) perverse sheaf on an affine Kähler manifold $X$.
Let $x \in  X$ be point and $f\colon X \to  \mathbb{C}$ a suitably chosen holomorphic Morse function with $f(x) = 0$ and single critical point $x$.
Then the stalk $(\phi _{\!f}\sheaf F)_x$ is concentrated in cohomological degree $0$ (here we use $\phi _{\!f}$ for the vanishing cycles functor).
A more \enquote{geometric} formulation of this statement can be obtained in the following way.
Let $L$ be the stable manifold for the gradient of the Morse function $\Re f$.
Write $i_x \colon \{x\} \hookrightarrow L$ and $i_L\colon L \hookrightarrow X$ for the inclusions.
Then $i_x^*i_L^! \sheaf F$ is also concentrated in cohomological degree $0$.
Note that $L$ is a Lagrangian with respect to the symplectic structure given by the Kähler form.

Inspired by this result in the constructible setting, we prove a related statement in the setting of coherent sheaves.
Let $X$ be a variety with an action by an algebraic group $G$ with finitely many orbits.
In this situation one can define \emph{perverse t-structures} on the derived category $D_c^b(X)^G$ of coherent $G$-equivariant sheaves (see Section~\ref{sec:Kashiwara} for a review of the theory of perverse coherent sheaves).
For a given perversity function we define the notion of a \emph{measuring subvariety} (Definition~\ref{def:measuring}) as an analog of the Lagrangian $L$ in the constructible case.
Our main theorem (Theorem~\ref{thm:main}) then states that a coherent sheaf $\sheaf F \in  D^b_c(X)^G$ is perverse if and only if $i_Z^!\sheaf F$ is concentrated in cohomological degree $0$ for sufficiently many measuring subvarieties $Z$ of $X$.

\begin{Ex}
    Let $N$ be the nilpotent cone in the complex Lie algebra $\liesl n$ and let $G = \SL n$ act on $N$ adjointly.
    Then the dimensions of the $G$-orbits in $N$ are known to be even-dimensional.
    Thus there exists a middle perversity $p$ with $p(O) = \frac12 \dim O$ for each $G$-orbit $O$.
    It is know that any maximal nilpotent subalgebra $\liealg n$ intersects each $G$-orbit closure in $N$ in a Lagrangian.
    So $\liealg n$ fulfills the first condition for being a measuring subvariety.

    However, as far as we know the second, technical, condition on a measuring subvariety (i.e.~that $\liealg n$ is a set-theoretic local complete intersection in each orbit closure) is still an open problem for general $n$.
    If it is shown to hold, then the theorem implies that a sheaf $\sheaf F \in  D_c^b(N)^G$ is perverse if and only if $R\lc{\liealg n}\sheaf F$ is concentrated in degree $0$.
\end{Ex}

We expect that in general interesting examples of measuring subvarieties are provided by Lagrangians on (singular) symplectic varieties in the sense of \cite{Beauville:2000:SymplecticSingularities}.
Here by a Lagrangian we mean a closed subvariety that intersects every symplectic leaf in a Lagrangian.
This gives a clear analogy with the constructible setting, indicating that a kind of \enquote{microlocal characterization} of perverse coherent sheaves might be possible.
However, while a thorough examination of the symplectic case would be interesting, we do not provide one in the present paper.

Finally, since the motivating observation about constructible perverse sheaves does not seem to be in the literature (though \cite[Theorem~3.5]{MirkovicVilonen:2007:GLdualityRepresentations} is in the same spirit), we give a direct proof of the statement in the appendix.

\subsection{Setup and notation}

Let $X$ be a Noetherian separated scheme of finite type over an algebraically closed field $k$.
Let $G$ be an algebraic group over $k$ acting on $X$.
Until Section~\ref{sec:measuring} we include the possibility of $G$ being trivial.
We write $\Xtop$ for the subset of the Zariski space of $X$ consisting of generic points of $G$-invariant subschemes and equip $\Xtop$ with the induced topology.
To simplify notation, if $x \in  \Xtop$ is any point, we write $\overline x$ for the closure $\overline{\{x\}}$ and set $\dim x = \dim \overline x$.

We write $D(X)$, $D_{qc}(X)$ and $D_c(X)$ for the derived category of $\sO_X$-modules and its full subcategories consisting of complexes with quasi-coherent and coherent cohomology sheaves respectively.
The corresponding categories of $G$-equivariant sheaves (i.e.\ the categories for the quotient stack $[X/G]$) are denoted $D(X)^G$, $D_{qc}(X)^G$ and $D_c(X)^G$.
As usual, $D^b(X)$ (etc.) is the full subcategory of $D(X)$ consisting of complexes with cohomology in only finitely many degrees.
All functors are derived, though we usually do not explicitly mention it in the notation.

For a subset $Y$ of a topological space $X$ we write $i_Y$ for the inclusion of $Y$ into $X$. 
If $x \in  X$ is a point, then we simply write $i_x$ for $i_{\{x\}}$.
Let $Z$ be a closed subset of $X$.
For an $\sO_X$-module $\sheaf F$ let $\lc Z\sheaf F$ be the subsheaf of $\sheaf F$ of sections with support in $Z$ \cite[Variation~3 in IV.1]{Hartshorne:1966:ResiduesAndDuality}.
By abuse of notation, we simply write $\lc Z$ for the right-derived functor $R\lc Z\colon D_{qc}(X) \to  D_{qc}(X)$.
Recall that $\lc Z$ only depends on the closed subset $Z$, and not on the structure of $Z$ as a subscheme.

Let $x$ be a (not necessarily closed) point of $X$ and $\sheaf F \in  D^b(X)$.
Then $\mathbf i_x^*\sheaf F = \sheaf F_x \in  D^b(\catModules{\sO_x})$ denotes the (derived) functor of talking stalks.
We further set $\mathbf i_x^!\sheaf F = \mathbf i_x^*\lc {\overline x}$, cf.~\cite[Variation~8 in IV.1]{Hartshorne:1966:ResiduesAndDuality}.

We assume that $X$ has a $G$-equivariant dualizing complex $\sheaf R$ (see~\cite[Definition~1]{Bezrukavnikov:arXiv:PerverseCoherentSheaves}) which we assume to be normalized, i.e.\ $\mathbf i_x^! \sheaf R$ is concentrated in degree $-\dim x$ for all $x \in  \Xtop$.
For $\sheaf F \in  D(X)$ (or $D(X)^G$) we write $\dualize \sheaf F = \sheafHom_{\sO_X}(\sheaf F,\sheaf R)$ for its dual.

\subsection{Acknowledgements}

The motivating observation about constructible sheaves was suggested to me by my advisor, David Nadler.
I would also like to thank Sam Gunningham for valuable comments and Geordie Williamson for answering a question on MathOverflow (see Lemma~\ref{lem:constructible_local_vanishing}).
Finally, I am grateful for the helpful comments and suggestions by the anonymous referee.

\section{Perverse coherent sheaves}
\label{sec:Kashiwara}%

By a \emph{perversity} we mean a function $p\colon \{0,\dotsc,\dim X\} \to  \mathbb{Z}$.
For $x \in  \Xtop$ we abuse notation and set $p(x) = p(\dim x)$.
Then $p\colon \Xtop \to  \mathbb{Z}$ is a perversity function in the sense of~\cite{Bezrukavnikov:arXiv:PerverseCoherentSheaves}.
Note that we insist that $p(x)$ only depends on the dimension of $\overline x$.
A perversity is called \emph{monotone} if it is decreasing and \emph{comonotone} if the \emph{dual perversity} $\overline p(n) = -n - p(n)$ is decreasing.
It is \emph{strictly monotone} (resp.~\emph{strictly comonotone}) if for all $x,y \in  \Xtop$ with $\dim x < \dim y$ one has $p(x) > p(y)$ (resp.~$\overline p(x) > \overline p(y)$).
Note that a strictly monotone perversity is not necessarily strictly decreasing (e.g.~if $X$ only has even-dimensional $G$-orbits).

Recall that if $p$ is a monotone and comonotone perversity then Bezrukavnikov (following Deligne) defines a t-structure on $D_c^b(X)^G$ by taking the following full subcategories \cite{Bezrukavnikov:arXiv:PerverseCoherentSheaves,ArinkinBezrukavnikov:2010:PerverseCoherentSheaves}:
\begin{align*}
    \perv[p] D^{\leq 0}(X)^G & = 
    \bigl\{ \sheaf F \in  D_c^b(X)^G : \mathbf i_x^*\sheaf F \in  D^{\leq p(x)}(\catModules{\sO_x}) \text{ for all $x \in  \Xtop$}\bigr\}, \\
    \perv[p] D^{\geq 0}(X)^G & = 
    \bigl\{ \sheaf F \in  D_c^b(X)^G : \mathbf i_x^!\sheaf F \in  D^{\geq p(x)}(\catModules{\sO_x}) \text{ for all $x \in  \Xtop$}\bigr\}.
\end{align*}
The heart of this t-structure is called the category of \emph{perverse sheaves} with respect to the perversity $p$.

In~\cite{Kashiwara:2004:tStructureOnHolonomicDModuleCoherentOModules}, Kashiwara also gives a definition of a perverse t-structure on $D^b_{c}(X)$.
While we work in Bezrukavnikov's setting (i.e.\ in the equivariant derived category on a potentially singular scheme), we need a description of the perverse t-structure that is closer to the one Kashiwara uses.
This is accomplished in the following proposition.

\begin{Prop}\label{prop:equivDeligneKashiwara}%
    Let $\sheaf F \in  D_c^b(X)^G$ and let $p$ be a monotone and comonotone perversity function.
    \begin{enumerate}
        \item \label{li:prop:equivDeligneKashiwara:le}%
            The following are equivalent:
            \begin{enumerate}
                \item \label{li:prop:equivDeligneKashiwara:le:1}%
                    $\sheaf F \in  \perv D^{\leq 0}(X)^G$, i.e.\ $\mathbf i_x^*\sheaf F \in  D^{\leq p(x)}(\catModules{\sO_x})$ for all $x \in  \Xtop$;
                \item \label{li:prop:equivDeligneKashiwara:le:2}%
                    $p(\dim \supp H^{k}(\sheaf F)) \geq  k$ for all $k$.
            \end{enumerate}
        \item \label{li:prop:equivDeligneKashiwara:ge}%
            If $p$ is strictly monotone, then the following are equivalent
            \begin{enumerate}
                \item \label{li:prop:equivDeligneKashiwara:ge:1}%
                    $\sheaf F \in  \perv D^{\ge 0}(X)^G$, i.e.\ $\mathbf i_x^!\sheaf F \in  D^{\geq p(x)}(\catModules{\sO_x})$ for all $x \in  \Xtop$;
                \item \label{li:prop:equivDeligneKashiwara:ge:2}%
                    $\lc {\overline x}\sheaf F \in  D^{\geq p(x)}(X)$ for all $x \in  \Xtop$;
                \item \label{li:prop:equivDeligneKashiwara:ge:3}%
                    $\lc {Y}\sheaf F \in  D^{\geq p(\dim Y)}(X)$ for all $G$-invariant closed subvarieties $Y$ of $X$;
                \item \label{li:prop:equivDeligneKashiwara:ge:4}%
                    $\dim\left( \overline x \cap  \supp\left( H^k(\dualize \sheaf F) \right) \right) \leq  -p(x) - k$ for all $x \in  \Xtop$ and all $k$.
            \end{enumerate}
    \end{enumerate}
\end{Prop}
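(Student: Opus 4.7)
For part (i), the plan is to directly translate the condition $\mathbf i_x^*\sheaf F \in D^{\leq p(x)}(\catModules{\sO_x})$ into a statement about supports of cohomology sheaves. Since $H^k(\mathbf i_x^*\sheaf F) = H^k(\sheaf F)_x$, the condition becomes: whenever $x \in \supp H^k(\sheaf F)$, then $p(x) \geq k$. Specializing to the generic point of a top-dimensional irreducible component of $\supp H^k(\sheaf F)$ (which lies in $\Xtop$ because the support is $G$-invariant) yields (ii); the converse uses monotonicity, since any $x \in \supp H^k(\sheaf F)$ has $\dim x \leq \dim \supp H^k(\sheaf F)$, hence $p(x) \geq p(\dim \supp H^k(\sheaf F)) \geq k$.

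For part (ii), the implications (iii)$\Rightarrow$(ii) (take $Y = \overline x$) and (ii)$\Rightarrow$(i) (stalks preserve lower bounds) are immediate. For (ii)$\Rightarrow$(iii), I would decompose any $G$-invariant closed $Y$ into its irreducible components $\overline{y_1}, \dotsc, \overline{y_n}$ with $y_i \in \Xtop$, and use the Mayer--Vietoris triangle for local cohomology to inductively build $\lc Y\sheaf F$ out of the $\lc{\overline{y_i}}\sheaf F$ and the local cohomologies along the pairwise intersections; since every component and every intersection has dimension at most $\dim Y$, monotonicity of $p$ puts each building block in $D^{\geq p(\dim Y)}$. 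The harder implication is (i)$\Rightarrow$(ii): using the identity $\lc{\overline y}\lc{\overline x} = \lc{\overline y}$ for $y \in \overline x$, we obtain $\mathbf i_y^!(\lc{\overline x}\sheaf F) = \mathbf i_y^!\sheaf F \in D^{\geq p(y)} \subseteq D^{\geq p(x)}$, where strict monotonicity is used for the last inclusion. One then needs the key local-to-global lemma that a coherent complex lies in $D^{\geq n}(X)$ as soon as all its costalks $\mathbf i_y^!$ do—this is a Kashiwara-type statement proved via the coniveau/Cousin spectral sequence, and is essentially the one place where strict monotonicity of $p$ is genuinely needed, since it prevents cancellations in that spectral sequence. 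I expect this local-to-global step to be the main technical obstacle of the proof.

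Finally, for (i)$\Leftrightarrow$(iv) I would invoke local Grothendieck--Serre duality to write $\mathbf i_x^!\sheaf F \cong \mathrm{RHom}_{\sO_x}(\mathbf i_x^*\dualize\sheaf F, \mathbf i_x^!\sheaf R)$, where $\mathbf i_x^!\sheaf R$ is concentrated in cohomological degree $-\dim x$ by the normalization of $\sheaf R$. Consequently $\mathbf i_x^!\sheaf F \in D^{\geq p(x)}$ holds iff $\mathbf i_x^*\dualize\sheaf F \in D^{\leq -p(x) - \dim x} = D^{\leq \overline p(x)}$, i.e.\ iff $\dualize\sheaf F \in \perv[\overline p] D^{\leq 0}(X)^G$ (note that $\overline p$ is monotone because $p$ is comonotone). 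Applying part (i) to $\dualize\sheaf F$ with the dual perversity $\overline p$ rewrites this as $\overline p(\dim\supp H^k(\dualize\sheaf F)) \geq k$ for all $k$. To match this with (iv), taking $x$ to be the generic point of an irreducible component of $\supp H^k(\dualize\sheaf F)$ recovers the support condition, while conversely, for arbitrary $x$, applying the support condition at a top-dimensional component of $\overline x \cap \supp H^k(\dualize\sheaf F)$ together with monotonicity of $p$ produces the dimension bound $\dim(\overline x \cap \supp H^k(\dualize\sheaf F)) \leq -p(x) - k$.
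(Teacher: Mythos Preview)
Your argument for part~(i) is correct and matches the paper. Your route to (i)$\Leftrightarrow$(iv) via biduality and part~(i) applied to $\dualize\sheaf F$ with the dual perversity $\overline p$ is a clean alternative to the paper's approach, which instead proves (ii)$\Leftrightarrow$(iv) through a separate lemma (their Lemma~\ref{lem:supportAndLocalCohomology+}) characterizing $\lc Z\sheaf F\in D^{\ge n}$ by $\dim(Z\cap\supp H^k(\dualize\sheaf F))\le -k-n$.

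There is, however, a real gap in your (i)$\Rightarrow$(ii). The complex $\lc{\overline x}\sheaf F$ is in general only quasi-coherent, not coherent, so you cannot invoke a ``local-to-global'' lemma for coherent complexes. A Cousin/coniveau argument for quasi-coherent complexes can be made to work, but it hinges on knowing that an associated point of the lowest nonvanishing cohomology of $\lc{\overline x}\sheaf F$ lies in $\Xtop$, which is not automatic. The paper avoids this entirely: it proves (i)$\Rightarrow$(iii) directly by induction on $\dim Y$. For irreducible $Y=\overline x$, if $k<p(x)$ is minimal with $H^k(\lc{\overline x}\sheaf F)\ne 0$, then the triangle $\lc{\overline x}\sheaf F\to\sheaf F\to j_*j^*\sheaf F$ together with the equivariant Grothendieck Finiteness Theorem forces $H^k(\lc{\overline x}\sheaf F)$ to be \emph{coherent}; its support is then a proper closed $G$-invariant $Z\subsetneq\overline x$, and applying $\lc Z$ and the induction hypothesis gives the contradiction. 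This coherence step via Grothendieck finiteness is the missing idea in your outline.

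A smaller point: in your Mayer--Vietoris step for (ii)$\Rightarrow$(iii), monotonicity alone is not enough. From $\lc{Y_1\cap Y_2}\sheaf F\to\lc{Y_1}\sheaf F\oplus\lc{Y_2}\sheaf F\to\lc Y\sheaf F\xrightarrow{+1}$ you need the left term in $D^{\ge p(\dim Y)+1}$, not merely $D^{\ge p(\dim Y)}$, to conclude $\lc Y\sheaf F\in D^{\ge p(\dim Y)}$. This is exactly where strict monotonicity enters (since $\dim(Y_1\cap Y_2)<\dim Y$), and it is also where the paper uses it --- not in any spectral-sequence cancellation as you surmise.
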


A crucial fact that we will implicitly use quite often in the following arguments is that the support of a coherent sheaf is always closed.
In particular, this means that if $x$ is a generic point and $\sheaf F$ a coherent sheaf, then $\mathbf i_x^* \sheaf F = 0$ if and only if $\res{\sheaf F}U = 0$ for some open set $U$ intersecting $\overline x$.

\begin{proof}\leavevmode
    \begin{enumerate}
        \item
            First let $\sheaf F \in  \perv D^{\leq 0}(X)^G$ and assume for contradiction that there exists an integer $k$ such that $p(\dim \supp H^{k}(\sheaf F)) < k$.
            Let $x$ be the generic point of an irreducible component of maximal dimension of $\supp H^{k}(\sheaf F)$.
            Then $H^k(\mathbf i_x^* \sheaf F) \ne 0$. 
            But on the other hand, $\mathbf i_x^*\sheaf F \in  D^{\leq p(x)}(\catModules{\sO_x})$ and $p(x) = p(\dim \supp H^{k}(\sheaf F)) < k$, yielding a contradiction.

            Conversely assume that $p(\dim \supp H^{k}(\sheaf F)) \geq  k$ for all $k$ and let $x \in  \Xtop$.
            If $H^k(\mathbf i_x^*\sheaf F) \ne 0$, then $\dim x \leq  \dim \supp H^{k}(\sheaf F)$.
            Thus monotonicity of the perversity implies that $\sheaf F \in  \perv D^{\leq 0}(X)^G$.
        \item
            The implications from \ref{li:prop:equivDeligneKashiwara:ge:3} to \ref{li:prop:equivDeligneKashiwara:ge:2} and \ref{li:prop:equivDeligneKashiwara:ge:2} to \ref{li:prop:equivDeligneKashiwara:ge:1} are trivial and the equivalence of \ref{li:prop:equivDeligneKashiwara:ge:2} and \ref{li:prop:equivDeligneKashiwara:ge:4} follows from Lemma~\ref{lem:supportAndLocalCohomology+} below.
            Thus we only need to show that \ref{li:prop:equivDeligneKashiwara:ge:1} implies \ref{li:prop:equivDeligneKashiwara:ge:3}.
            So assume that $\sheaf F \in  \perv D^{\geq 0}(X)^G$.
            We induct on the dimension of $Y$.
            
            If $\dim Y = 0$, then $\Gamma (X,\lc Y \sheaf F) = \bigoplus_{y \in  \Xtop[Y]} \mathbf i_y^!\sheaf F$ and thus $\lc Y\sheaf F \in  D^{\geq p(0)}(X)$ by assumption.

            Now let $\dim Y > 0$.
            We first assume that $Y$ is irreducible with generic point $x \in  \Xtop$.
            Let $k$ be the smallest integer such that $H^k(\lc {\overline x} \sheaf F) \ne 0$ and assume that $k < p(x)$.
            We will show that this implies that $H^k(\lc {\overline x}\sheaf F) = 0$, giving a contradiction.

            We first show that $H^k(\lc {\overline x}\sheaf F)$ is coherent.
            Let $j\colon X \setminus {\overline x} \hookrightarrow X$ and consider the distinguished triangle
            \[
                \lc {\overline x} \sheaf F \to  \sheaf F \to  j_*j^* \sheaf F \xrightarrow{+1}.
            \]
            Applying cohomology to it we get an exact sequence
            \[
                H^{k-1}(j_*j^*\sheaf F) \to  H^k(\lc{\overline x} \sheaf F) \to  H^k(\sheaf F).
            \]
            By assumption, $k-1 \le p(x) - 2$, so that $H^{k-1}(j_*j^*\sheaf F)$ is coherent by the Grothendieck Finiteness Theorem in the form of \cite[Corollary~3]{Bezrukavnikov:arXiv:PerverseCoherentSheaves}.
            As $H^k(\sheaf F)$ is coherent by definition, this implies that $H^k(\lc{\overline x} \sheaf F)$ also has to be coherent.

            Set $Z = \supp H^k(\lc {\overline x}\sheaf F)$.
            Then, since $i_x^* H^k(\lc {\overline x}\sheaf F) = H^k(\mathbf i_x^! \sheaf F)$ vanishes, $Z$ is a proper closed subset of $\overline x$.
            We consider the distinguished triangle
            \[
                H^k(\lc {\overline x}\sheaf F)[-k] \to 
                \lc {\overline x}\sheaf F \to 
                \tau _{>k}\lc {\overline x}\sheaf F \xrightarrow{+1},
            \]
            and apply $\lc Z$ to it:
            \[
                \lc Z H^k(\lc {\overline x}\sheaf F)[-k] =
                H^k(\lc {\overline x}\sheaf F)[-k] \to 
                \lc Z \sheaf F \to 
                \lc Z \tau _{>k}\lc {\overline x}\sheaf F \xrightarrow{+1}.
            \]
            Since $\dim Z < \dim x$, we can use the induction hypothesis and monotonicity of $p$ to deduce that $\lc Z \sheaf F$ is in degrees at least $p(\dim Z) \ge p(x) > k$.
            Clearly $\lc Z \tau _{>k}\lc {\overline x}\sheaf F$ is also in degrees larger than $k$.
            Hence $H^k(\lc {\overline x}\sheaf F)$ has to vanish.

            If $Y$ is not irreducible, let $Y_1$ be an irreducible component of $Y$ and $Y_2$ be the union of the other components.
            Then there is a Mayer-Vietoris distinguished triangle
            \[
                \lc {Y_1\cap Y_2} \sheaf F \to  \lc {Y_1} \sheaf F \oplus \lc{Y_2}\sheaf F \to  \lc{Y} \sheaf F \xrightarrow{+1},
            \]
            where $\lc {Y_1\cap Y_2} \sheaf F \in  D^{\ge p(\dim Y_1\cap Y_2)}(X) \subseteq D^{\ge p(\dim Y)+1}$ (by the induction hypothesis and strict monotonicity of $p$) and $\lc{Y_1} \sheaf F$ and $\lc{Y_2} \sheaf F$ are in $D^{\ge p(\dim Y)}(X)$ by induction on the number of components of $Y$.
            Thus $\lc Y \sheaf F \in  D^{\ge p(\dim Y)}$ as required.
            \qedhere
    \end{enumerate}
\end{proof}

\begin{Lem}[{\cite[Proposition~5.2]{Kashiwara:2004:tStructureOnHolonomicDModuleCoherentOModules}}]%
    \label{lem:supportAndLocalCohomology+}%
    Let $\sheaf F \in  D_c^b(X)$, $Z$ a closed subset of $X$, and $n$ an integer.
    Then $\lc Z\sheaf F \in  D_{qc}^{\geq n}(X)$ if and only if $\dim(Z\cap \supp(H^k(\dualize \sheaf F))) \le - k - n$ for all $k$.
\end{Lem}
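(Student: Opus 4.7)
The strategy is to use the dualizing complex $\sheaf R$ to translate the question about $\lc Z\sheaf F$ into a support condition on $\dualize\sheaf F$. The starting point is the identity
\[
    \lc Z\sheaf F \;\cong\; \sheafHom(\dualize\sheaf F,\lc Z\sheaf R),
\]
which follows from biduality $\sheaf F \cong \sheafHom(\dualize\sheaf F,\sheaf R)$ together with the colimit presentation $\lc Z(-) = \varinjlim_n \sheafHom(\sO_X/\sheaf I_Z^n,-)$; because $\dualize\sheaf F$ is a bounded coherent complex, $\sheafHom(\dualize\sheaf F,-)$ commutes with this filtered colimit.

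Next, I would reduce to the case that $\dualize\sheaf F$ is a single coherent sheaf by devissage along the truncation triangles $\tau_{\le m}\dualize\sheaf F \to \dualize\sheaf F \to \tau_{>m}\dualize\sheaf F$. Both the condition $\lc Z\sheaf F \in D^{\ge n}$ and the stated dimension inequality behave well under such triangles, so it suffices to control each $\sheafHom(H^k(\dualize\sheaf F),\lc Z\sheaf R)$ in $D^{\ge n+k}$ for all $k$ (using the shift by $k$ from placing $H^k(\dualize\sheaf F)$ in degree $k$).

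The single-sheaf statement then becomes a form of Grothendieck local duality: for a coherent sheaf $\sheaf G$, the claim is that
\[
    \sheafHom(\sheaf G,\lc Z\sheaf R) \in D^{\ge -\dim(Z\cap\supp\sheaf G)},
\]
with nonvanishing in the extremal degree. This can be verified locally: at a generic point $y$ of a maximal-dimensional component of $Z \cap \supp\sheaf G$, the normalization $\mathbf i_y^!\sheaf R$ being concentrated in degree $-\dim y$ identifies the restriction of $\sheaf R$ to $\Spec \sO_{X,y}$ as a suitably shifted dualizing complex on the local ring, and Grothendieck's local duality then relates $\sheafHom$ into it with the Matlis dual of local cohomology supported on the ideal of $Z$ in $\sO_{X,y}$. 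Combining this with the reduction above gives $\lc Z\sheaf F \in D^{\ge n}$ if and only if $-k - \dim(Z\cap \supp H^k(\dualize\sheaf F)) \ge n$ for all $k$, which is the lemma.

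The main obstacle is the careful bookkeeping of cohomological shifts in the last step: the normalization contributes $-\dim y$, the stalk $\sO_{X,y}$ has its own Krull dimension, and the shifts in the local dualizing complex relative to $\sheaf R$ must combine to give exactly the bound $-\dim(Z\cap\supp\sheaf G)$. Moreover, the nonvanishing at the extremal degree (needed to obtain the ``only if'' direction) requires verifying that the maximal-dimensional contribution is not cancelled by the devissage spectral sequence.
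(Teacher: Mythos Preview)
Your route is genuinely different from the paper's, and the gaps you flag at the end are real and not yet closed.

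The paper does not manipulate $\lc Z\sheaf R$ or invoke local duality at all. Instead it chains together four off-the-shelf equivalences:
\begin{itemize}
    \item By \cite[Proposition~VII.1.2]{SGA2}, $\lc Z\sheaf F \in D_{qc}^{\ge n}(X)$ holds if and only if $\sheafHom(\sheaf G,\sheaf F) \in D_c^{\ge n}(X)$ for every coherent $\sheaf G$ with $\supp\sheaf G \subseteq Z$.
    \item By duality (\cite[Lemma~5a]{Bezrukavnikov:arXiv:PerverseCoherentSheaves}), this is the same as $\dualize\sheafHom(\sheaf G,\sheaf F) \in \perv[d]D^{\le -n}(X)$ for the dual standard perversity $d$.
    \item By \cite[Proposition~V.2.6]{Hartshorne:1966:ResiduesAndDuality}, $\dualize\sheafHom(\sheaf G,\sheaf F) = \sheaf G \otimes_{\sO_X} \dualize\sheaf F$, so via Proposition~\ref{prop:equivDeligneKashiwara}\ref{li:prop:equivDeligneKashiwara:le} the condition becomes $\dim\supp H^k(\sheaf G \otimes \dualize\sheaf F) \le -k-n$ for all $k$.
    \item Finally, \cite[Lemma~5.3]{Kashiwara:2004:tStructureOnHolonomicDModuleCoherentOModules} (whose proof does not require smoothness) turns the last condition, quantified over all such $\sheaf G$, into $\dim(Z\cap\supp H^k(\dualize\sheaf F)) \le -k-n$.
\end{itemize}
Each step is an honest biconditional, so both directions come for free and there is no cancellation issue to worry about.

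Your approach can be made to work, but as written it is a sketch. The identity $\lc Z\sheaf F \cong \sheafHom(\dualize\sheaf F,\lc Z\sheaf R)$ is fine. The single-sheaf vanishing bound $\sheafHom(\sheaf G,\lc Z\sheaf R) \in D^{\ge -\dim(Z\cap\supp\sheaf G)}$ is correct and gives the ``if'' direction cleanly by d\'evissage. The problem is the ``only if'' direction: you need that if the dimension bound fails for some $k$, then $\lc Z\sheaf F$ genuinely has cohomology in some degree $< n$, and the hypercohomology spectral sequence for $\sheafHom(\dualize\sheaf F,\lc Z\sheaf R)$ does not make this transparent --- different $H^k(\dualize\sheaf F)$ can contribute in overlapping degrees. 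One can salvage this by localizing at a generic point $y$ of a maximal-dimensional component of $Z\cap\supp H^{k}(\dualize\sheaf F)$ for a well-chosen $k$ (so that all other contributions at $y$ sit in strictly higher degrees), but you have not carried that out, and it is precisely the ``careful bookkeeping'' you defer. The paper's route buys you both directions simultaneously with no such analysis.
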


This lemma extends \cite[Proposition~5.2]{Kashiwara:2004:tStructureOnHolonomicDModuleCoherentOModules} to singular varieties.
The proof is same as for the smooth case, but we will include it here for completeness.

\begin{proof}
    By~\cite[Proposition~VII.1.2]{SGA2}, $\lc Z\sheaf F \in  D_{qc}^{\ge n}(X)$ if and only if 
    \begin{equation}
        \label{eq:supportAndCohomology+}%
        \sheafHom(\sheaf G,\sheaf F) \in  D_c^{\ge n}(X)
    \end{equation}
    for all $\sheaf G \in  \catCoh{X}$ with $\supp \sheaf G \subseteq Z$.
    Let $d(n) = -n$ be the dual standard perversity.
    Then by~\cite[Lemma~5a]{Bezrukavnikov:arXiv:PerverseCoherentSheaves}, \eqref{eq:supportAndCohomology+} holds if and only if $\dualize \sheafHom(\sheaf G,\sheaf F) \in  \perv[d] D^{\le -n}(X)$.
    By~\cite[Proposition~V.2.6]{Hartshorne:1966:ResiduesAndDuality}, $\dualize \sheafHom(\sheaf G,\sheaf F) = \sheaf G \otimes_{\sO_X} \dualize \sheaf F$, so that by Proposition~\ref{prop:equivDeligneKashiwara}\ref{li:prop:equivDeligneKashiwara:le} we need to show that
    \[
        \dim \supp H^{k}\left(\sheaf G \otimes_{\sO_X} \dualize \sheaf F\right) \le - k - n 
    \]
    for all $k$.
    By~\cite[Lemma~5.3]{Kashiwara:2004:tStructureOnHolonomicDModuleCoherentOModules} (whose proof does not use the smoothness assumption) this is equivalent to
    \[
        \dim \left(Z\cap \supp H^{k}(\dualize \sheaf F)\right) \le - k - n
    \]
    for all $k$, completing the proof.
\end{proof}

\section{Measuring subvarieties}
\label{sec:measuring}%

From now on we assume that the $G$-action has finitely many orbits.

\begin{Def}\label{def:measuring}%
    Let $p$ be a perversity.
    A \emph{$p$-measuring subvariety} of $X$ is a closed subvariety $Z$ of $X$ such that the following conditions hold for each $x \in  \Xtop$ with $\overline x \cap  Z \ne \emptyset$:
    \begin{itemize}
        \item $\dim(\overline x \cap  Z) = p(x) + \dim x$;
        \item $\overline x \cap  Z$ is a set-theoretic local complete intersection in $\overline x$, i.e. up to radical $\overline x \cap  Z$ is locally defined in $\overline x$ by exactly $-p(x)$ functions.
    \end{itemize}

    A \emph{$p$-measuring family of subvarieties} is a collection $\measuringFam$ of $p$-measuring subvarieties of $X$ such that for each $x \in  \Xtop$ there exists $Z \in  \measuringFam$ with $Z \cap  \overline x \ne \emptyset$.
    We say that $X$ has \emph{enough $p$-measuring subvarieties} if there exists a $p$-measuring family of subvarieties of $X$.
\end{Def}

\begin{Rem}\label{rem:existence}%
    Let $Z$ be a $p$-measuring subvariety.
    Then $\dim(\overline x \cap  Z) = -\overline p(x)$.
    Thus comonotonicity of $p$ ensures that if $\dim y \leq  \dim x$ then $\dim (\overline y \cap  Z) \leq  \dim (\overline x \cap  Z)$ for each $p$-measuring $Z$.
    Monotonicity of $p$ then further says that $\dim (\overline x \cap  Z) - \dim (\overline y \cap  Z) \leq  \dim x - \dim y$.
    We clearly have $0 \le \dim(\overline x \cap  Z) \le \dim x$ and hence $-\dim x \le p(x) \le 0$.
    We will show in Theorem~\ref{thm:existance} that this condition is actually sufficient for the existence of enough $p$-measuring subvarieties, at least when $X$ is affine.
\end{Rem}

\begin{Thm}\label{thm:main}%
    Let $p$ be a strictly monotone and (not necessarily strictly) comonotone perversity and assume that $X$ has enough $p$-measuring subvarieties.
    Let $\measuringFam$ be a $p$-measuring family of subvarieties of $X$.
    Then we have:
    \begin{enumerate}
        \item $\perv[p] D^{\leq 0}(X)^G = \left\{ \sheaf F \in  D_c^b(X)^G : \lc Z\sheaf F \in  D^{\leq 0}(X) \text{ for all $Z \in  \measuringFam$}\right\}$;
        \item $\perv[p] D^{\geq 0}(X)^G = \left\{ \sheaf F \in  D_c^b(X)^G : \lc Z\sheaf F \in  D^{\geq 0}(X) \text{ for all $Z \in  \measuringFam$}\right\}$.
    \end{enumerate}
    Therefore the sheaf $\sheaf F \in  D_c^b(X)^G$ is perverse with respect to $p$ if and only if $\lc Z\sheaf F$ is cohomologically concentrated in degree $0$ for each $p$-measuring subvariety $Z \in  \measuringFam$.
\end{Thm}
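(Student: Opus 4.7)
The plan is to prove parts (i) and (ii) separately, exploiting the two defining properties of a $p$-measuring subvariety $Z$ in complementary ways: the dimension formula $\dim(\overline x \cap Z) = p(x) + \dim x$ controls the $D^{\geq 0}$ statements via Lemma~\ref{lem:supportAndLocalCohomology+}, while the set-theoretic local complete intersection condition (that $\overline x \cap Z$ is cut locally in $\overline x$ by exactly $-p(x)$ functions) controls the $D^{\leq 0}$ statements via Grothendieck's bound on the cohomological dimension of local cohomology.

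Part~(ii) follows by combining Proposition~\ref{prop:equivDeligneKashiwara}(ii)(iv) with Lemma~\ref{lem:supportAndLocalCohomology+}. For the forward direction, given $\sheaf F \in \perv[p] D^{\geq 0}$, each irreducible component of $\supp H^k(\dualize \sheaf F)$ has generic point $x_i \in \Xtop$ with $\dim \overline{x_i} \leq -p(x_i) - k$ by~(iv); the measuring condition then yields $\dim(Z \cap \overline{x_i}) = p(x_i) + \dim x_i \leq -k$, hence $\dim(Z \cap \supp H^k(\dualize \sheaf F)) \leq -k$, and Lemma~\ref{lem:supportAndLocalCohomology+} gives $\lc Z \sheaf F \in D^{\geq 0}$. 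The reverse direction is the contrapositive: given a pair $(x, k)$ violating~(iv), pick $y$ the generic point of a top-dimensional component of $\overline x \cap \supp H^k(\dualize \sheaf F)$; monotonicity upgrades $\dim y > -p(x) - k$ to $\dim y > -p(y) - k$, so any measuring $Z$ meeting $\overline y$ satisfies $\dim(Z \cap \overline y) = p(y) + \dim y > -k$ and violates Lemma~\ref{lem:supportAndLocalCohomology+}.

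For part~(i) forward, Proposition~\ref{prop:equivDeligneKashiwara}(i) characterizes $\sheaf F \in \perv[p] D^{\leq 0}$ by $p(d_k) \geq k$ with $d_k = \dim \supp H^k(\sheaf F)$. For each top-dimensional component of $\supp H^k(\sheaf F)$ with generic point $x$, the LCI property says $\overline x \cap Z$ is cut in $\overline x$ by $-p(x)$ functions, so Grothendieck vanishing bounds the cohomological dimension of $\lc Z$ on coherent sheaves supported on $\overline x$ by $-p(x)$; combining over components yields $\lc Z H^k(\sheaf F) \in D^{\leq -p(d_k)}$. After shifting, $\lc Z H^k(\sheaf F)[-k] \in D^{\leq k - p(d_k)} \subseteq D^{\leq 0}$, and the canonical truncation triangles of $\sheaf F$ propagate this to $\lc Z \sheaf F \in D^{\leq 0}$.

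Part~(i) reverse is the main obstacle. Apply $\lc Z$ to the perverse truncation triangle $\perv[p]\tau_{\leq 0} \sheaf F \to \sheaf F \to \sheaf F'$ with $\sheaf F' := \perv[p]\tau_{> 0} \sheaf F \in \perv[p] D^{\geq 1}$: part~(i) forward gives $\lc Z \perv[p]\tau_{\leq 0}\sheaf F \in D^{\leq 0}$, part~(ii) forward (shifted) gives $\lc Z \sheaf F' \in D^{\geq 1}$, and combined with the hypothesis these force $\lc Z \sheaf F' = 0$ for every $Z$. Iterating with perverse truncation triangles of $\sheaf F'$ reduces the task to the irreducibility-type statement: a perverse coherent sheaf $\sheaf H$ with $\lc Z \sheaf H = 0$ for every $Z \in \measuringFam$ must vanish. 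For this, suppose $\sheaf H \neq 0$ and take $y_0$ the generic point of a top-dimensional component of $\supp \sheaf H$; by Proposition~\ref{prop:equivDeligneKashiwara}, $\mathbf i_{y_0}^*\sheaf H = \mathbf i_{y_0}^!\sheaf H = \sheaf H_{y_0}$ is nonzero and concentrated in degree $p(y_0)$. Pick a measuring $Z$ meeting $\overline{y_0}$ and let $w$ be the generic point of a top-dimensional component of $Z \cap \overline{y_0}$; then $\sO_{\overline{y_0}, w}$ is a local ring of dimension $-p(y_0)$ whose maximal ideal is set-theoretically cut by $-p(y_0)$ functions, and $H^{p(y_0)}(\sheaf H)_w$ is a nonzero finitely generated module of full-dimensional support over it, so Grothendieck's non-vanishing of top local cohomology produces a nonzero class in $H^{-p(y_0)}_Z(H^{p(y_0)}(\sheaf H))_w$. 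The hardest technical step is showing this class survives to $E_\infty$ in the hypercohomology spectral sequence $E_2^{s, q} = H^s_Z(H^q(\sheaf H))_w \Rightarrow H^{s + q}(\lc Z \sheaf H)_w$: the single-degree concentration of the stalks of $\sheaf H$ at generic points of orbit components of its support, combined with strict monotonicity of $p$, must be used to vanish the outgoing differentials (whose targets lie in degrees below $p(y_0)$ where $\sheaf H$ has no cohomology) and to control the incoming differentials from bidegrees $(-p(y_0) - r, p(y_0) + r - 1)$.
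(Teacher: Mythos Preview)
Your arguments for part~(ii) and for the forward direction of part~(i) are essentially the paper's: the paper packages the Grothendieck vanishing/non-vanishing step into a separate lemma (Lemma~\ref{lem:supportAndLocalCohomology-}) stating that for a single coherent sheaf $\sheaf G$ one has $p(\dim\supp\sheaf G)\geq n$ if and only if $H^i(\lc Z\sheaf G)=0$ for all $i\geq -n+1$ and all $Z\in\measuringFam$, and then runs the same truncation induction you describe.

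For the reverse direction of part~(i) your route is genuinely different from the paper's, and it contains both an unnecessary detour and a real gap. The paper does \emph{not} pass through the perverse truncation; it uses the same standard-truncation induction as in the forward direction, but now invoking the \emph{reverse} implication of Lemma~\ref{lem:supportAndLocalCohomology-} (Grothendieck non-vanishing) on the top cohomology sheaf $H^k(\sheaf F)$. Your approach via $\sheaf F' = \perv\tau_{>0}\sheaf F$ is perfectly sound up to the point where you obtain $\lc Z\sheaf F' = 0$ for all $Z\in\measuringFam$; but from there you overshoot. You already have part~(ii) in hand, and by shifting it says $\perv D^{\geq n}(X)^G = \{\sheaf G:\lc Z\sheaf G\in D^{\geq n}(X)\text{ for all }Z\}$ for every $n$. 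Since $\lc Z\sheaf F'=0$ lies in $D^{\geq n}$ for all $n$, you conclude $\sheaf F'\in\bigcap_n \perv D^{\geq n}(X)^G=0$ immediately. No iteration to a single perverse $\sheaf H$, and no spectral sequence, is needed.

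The spectral sequence argument you sketch is not only unnecessary but incomplete as stated. Your claim that the outgoing differentials from $E_2^{-p(y_0),\,p(y_0)}$ land ``in degrees below $p(y_0)$ where $\sheaf H$ has no cohomology'' is not justified: a perverse coherent sheaf $\sheaf H$ can certainly have nonzero $H^q(\sheaf H)$ for $q<p(y_0)$ (the condition $\sheaf H\in\perv D^{\leq 0}$ only bounds $\dim\supp H^q(\sheaf H)$, not its vanishing), and nothing prevents the chosen point $w$ from lying in the support of such a sheaf. Likewise your remark about controlling the incoming differentials is only a hint, and the comonotone case where $\dim(Z\cap\overline{y'})$ can equal $\dim(Z\cap\overline{y_0})$ for $\overline{y'}\subsetneq\overline{y_0}$ makes any naive choice of $w$ problematic. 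So as written the argument does not close; fortunately, as noted, it is not needed.
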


The following lemma encapsulates the central argument of the proof of the first part of the theorem.

\begin{Lem}\label{lem:supportAndLocalCohomology-}%
    Let $\sheaf F \in  \catCoh{X}^G$ be a $G$-equivariant coherent sheaf on $X$, let $p$ be a monotone perversity and let $n$ be an integer.
    Assume that $X$ has enough $p$-measuring subvarieties and let $\measuringFam$ be a $p$-measuring family of subvarieties of $X$.
    Then the following are equivalent:
    \begin{enumerate}
        \item $p(\dim \supp \sheaf F) \geq  n$;
        \item \label{li:lem:supportAndLocalCohomology-:2}%
            $H^i(\lc Z\sheaf F) = 0$ for all $i \geq  -n+1$ and all $Z \in  \measuringFam$.
    \end{enumerate}
\end{Lem}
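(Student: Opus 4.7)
The plan is to connect the cohomological degrees of $\lc Z \sheaf F$ directly to the codimension data built into the definition of a $p$-measuring subvariety---namely that $Z \cap \overline x$ is set-theoretically cut out in $\overline x$ by exactly $-p(x)$ functions---and to propagate this information through the geometry of $\supp \sheaf F$.

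For the implication from (i) to (ii), I would filter $\sheaf F$ by coherent subsheaves whose successive quotients are pushed forward from integral closed subschemes $Y \subseteq \supp \sheaf F$. Since $\lc Z$ commutes with closed embeddings, the problem reduces to bounding the cohomological dimension of $\lc{Z \cap \overline x}$ on quasi-coherent sheaves on $\overline x$, where $\overline x$ is an irreducible $G$-invariant supervariety of $Y$ with $x \in \Xtop$ and $\dim x \leq \dim \supp \sheaf F$ (such an $\overline x$ exists because $\supp \sheaf F$ is $G$-invariant, so the generic points of its irreducible components lie in $\Xtop$). The LCI clause supplies $-p(x)$ functions on $\overline x$ whose common zero locus is $Z \cap \overline x$, and the standard \v Cech computation bounds the cohomological dimension of $\lc{Z \cap \overline x}$ by $-p(x)$. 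Monotonicity of $p$ together with the hypothesis $p(\dim \supp \sheaf F) \geq n$ then yields $-p(x) \leq -n$, and reassembling along the filtration shows $\lc Z \sheaf F \in D^{\leq -n}(X)$.

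For the converse I would argue the contrapositive. Suppose $p(\dim \supp \sheaf F) < n$. Let $x$ be the generic point of a top-dimensional irreducible component of $\supp \sheaf F$, so $p(x) < n$ by monotonicity, and set $r = -p(x) \geq -n+1$. Pick $Z \in \measuringFam$ with $\overline x \cap Z \neq \emptyset$, let $W$ be an irreducible component of $\overline x \cap Z$ of maximal dimension $p(x) + \dim x$, and let $w$ be its generic point. The goal is to produce $H^r(\lc Z \sheaf F)_w \neq 0$. The key local claim is that in $\Spec \sO_{X,w}$ the closed set $Z \cap \supp \sheaf F$ collapses to the closed point: for any irreducible component $S_\alpha$ of $\supp \sheaf F$ containing $w$, its generic point $x_\alpha$ lies in $\Xtop$ with $\dim x_\alpha \leq \dim x$, and comonotonicity forces $\dim(Z \cap S_\alpha) = p(x_\alpha) + \dim x_\alpha \leq p(x) + \dim x = \dim W$, so any component of $Z \cap \supp \sheaf F$ containing $W$ must coincide with $W$. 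Consequently $\lc Z(\sheaf F)_w = H^*_{\ideal m_w}(\sheaf F_w)$, while a short dimension count shows $\sheaf F_w$ is a finitely generated $\sO_{X,w}$-module of Krull dimension exactly $r$. Grothendieck's non-vanishing theorem then forces $H^r_{\ideal m_w}(\sheaf F_w) \neq 0$, contradicting (ii).

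The hard part will be the local identification $\lc Z(\sheaf F)_w = H^*_{\ideal m_w}(\sheaf F_w)$ in the second direction: it is exactly here that both monotonicity and comonotonicity of $p$ are used in an essential way to rule out higher-dimensional components of $Z \cap \supp \sheaf F$ through $w$ contributed by other components of $\supp \sheaf F$. A secondary technical point is making the filtration in the first direction compatible with $G$-equivariance, so that the integral subschemes it produces can be enlarged to $G$-invariant supervarieties whose generic points lie in $\Xtop$ without losing the codimension estimate.
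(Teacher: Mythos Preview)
Your overall architecture matches the paper's: reduce to the support, use the bound on the number of defining equations for (i)$\Rightarrow$(ii), and invoke Grothendieck non-vanishing after a localization for (ii)$\Rightarrow$(i). For the forward direction your d\'evissage by a filtration with integral subquotients is a perfectly good substitute for the paper's Mayer--Vietoris reduction to the irreducible case; both routes land on the same \v Cech bound $H^i(\lc Z\sheaf F)=0$ for $i>-p(x)$.

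The gap is in your converse. You localize at the generic point $w$ of a top-dimensional component $W$ of $\overline x\cap Z$ and then argue that $(Z\cap\supp\sheaf F)_w$ collapses to the closed point; to rule out a larger component coming from another $S_\alpha=\overline{x_\alpha}$ through $w$ you invoke \emph{comonotonicity} to force $p(x_\alpha)+\dim x_\alpha\le p(x)+\dim x$. But the lemma only assumes $p$ is monotone, so this step is unjustified. (The existence of a $p$-measuring family forces the comonotone inequality only for \emph{nested} orbit closures $\overline y\subseteq\overline x$, whereas the $\overline{x_\alpha}$ you worry about are other components of $\supp\sheaf F$, not subvarieties of $\overline x$.) The paper sidesteps this entirely by localizing at a generic point of $Z$ itself: it passes to an affine open $U$ with $U\cap Z=V(\ideal p)$ irreducible of height $-p(d)$, so that after localizing at $\ideal p$ the set $V(\ideal p_{\ideal p})$ is automatically the closed point, no comonotonicity needed. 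The dimension of $\Gamma(U,\sheaf F)_{\ideal p}$ is then read off directly from $\supp\sheaf F=X$ and $\operatorname{height}(\ideal p)=-p(d)$, and Grothendieck non-vanishing finishes. The fix for your argument is to change the localization point from a generic point of $\overline x\cap Z$ to a generic point of $Z$; then your ``collapse'' claim becomes tautological and the comonotonicity appeal disappears.
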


\begin{proof}
    Since $\supp \sheaf F$ is always a union of the closure of orbits, we can restrict to the support and assume that $\supp \sheaf F = X$.

    First assume that $p(\dim X) = p(\dim \supp \sheaf F) \geq  n$.
    Using a Mayer-Vietoris argument it suffices to check condition \ref{li:lem:supportAndLocalCohomology-:2} in the case that $X$ is irreducible.
    By the definition of a $p$-measuring subvariety and monotonicity of $p$, this implies that, up to radical, $Z$ can be locally defined by at most $-n$ equations.
    Thus $H^i(\lc Z\sheaf F) = 0$ for $i > -n$ \cite[Theorem~3.3.1]{BrodmannSharp:1998:LocalCohomology}. 

    Now assume conversely that $H^i(\lc Z\sheaf F) = 0$ for all $i \geq  -n+1$ and all measuring subvarieties $Z \in  \measuringFam$.
    We have to show that $p(\dim X) \geq  n$.
    Set $d = \dim X$.
    Choose any $p$-measuring subvariety $Z \in  \measuringFam$ that intersects a maximal component of $X$ non-trivially.
    Then $\codim_X Z = -p(d)$.
    We will show that $H^{-p(d)}(\lc Z \sheaf F) \ne 0$ and hence $p(d) \ge n$ by assumption.
    Take some affine open subset $U$ of $X$ such that $U \cap Z$ is non-empty, irreducible and of codimension $-p(d)$ in $U$.
    It suffices to show that the cohomology is non-zero in $U$.
    Thus we can assume without loss of generality that $X$ is affine, say $X = \Spec A$, and $Z$ is irreducible.
    Write $Z = V(\ideal p)$ for some prime ideal $\ideal p$ of $A$.
    By flat base change~\cite[Theorem~4.3.2]{BrodmannSharp:1998:LocalCohomology},
    \[
    \Gamma (X,H^{-p(d)}(\lc Z \sheaf F))_{\ideal p} = 
    \left(H_{\ideal p}^{-p(d)}(\Gamma (X,\sheaf F))\right)_{\ideal p} =
    H_{\ideal p_{\ideal p}}^{-p(d)}(\Gamma (X,\sheaf F)_{\ideal p})
    \]
    Since $\dim \supp \sheaf F = \dim X = d$, the dimension of the $A_{\ideal p}$-module $\Gamma (X,\sheaf F)_{\ideal p}$ is $-p(d)$.
    Thus by the Grothendieck non-vanishing theorem~\cite[Theorem~6.1.4]{BrodmannSharp:1998:LocalCohomology}
    we have $H_{\ideal p_{\ideal p}}^{-p(d)}(\Gamma (X,\sheaf F)_{\ideal p}) \ne 0$ and hence $\Gamma (X,H^{-p(d)}(\lc Z \sheaf F)) \ne 0$ as required.
\end{proof}

\begin{proof}[Proof of Theorem~\ref{thm:main}]\leavevmode
\begin{enumerate}
\item 
    We use the description of $\perv[p] D^{\leq 0}(X)^G$ given by Proposition~\ref{prop:equivDeligneKashiwara}, i.e.
    \[
    \perv D^{\leq 0}(X)^G = \left\{ \sheaf F \in  D_c^b(X)^G : p\left(\dim\left( \supp H^{n}(\sheaf F) \right)\right) \geq  n \text{ for all $n$}\right\}.
    \]
    We induct on the largest $k$ such that $H^k(\sheaf F) \ne 0$ to show that $\sheaf F \in  \perv D^{\leq 0}(X)^G$ if and only if $\lc Z\sheaf F \in  D^{\leq 0}(X)$ for all $p$-measuring subvarieties $Z \in  \measuringFam$.

    The equivalence is trivial for $k \ll 0$.
    For the induction step note that there is a distinguished triangle
    \[
    \tau _{<k} \sheaf F \to  \sheaf F \to  H^k(\sheaf F)[-k] \xrightarrow{+1}.
    \]
    Applying the functor $\lc Z$ and taking cohomology we obtain an exact sequence
    \begin{multline*}
        \cdots \to 
        H^1(\lc Z(\tau _{<k} \sheaf F)) \to 
        H^1(\lc Z\sheaf F) \to 
        H^{k+1}(\lc Z(H^k(\sheaf F))) \to  \\
        H^2(\lc Z(\tau _{<k} \sheaf F)) \to 
        H^2(\lc Z\sheaf F) \to 
        H^{k+2}(\lc Z(H^k(\sheaf F))) \to 
        \cdots.
    \end{multline*}
    By induction, $H^j(\lc Z(\tau _{<k} \sheaf F))$ vanishes for $j \geq  1$ so that $H^j(\lc Z\sheaf F) \cong H^{k+j}(\lc Z(H^k(\sheaf F)))$ for $j \geq  1$.
    Thus the statement follows from Lemma~\ref{lem:supportAndLocalCohomology-}.
\item 
    By Proposition~\ref{prop:equivDeligneKashiwara}\ref{li:prop:equivDeligneKashiwara:ge}, $\sheaf F \in  \perv D^{\geq 0}(X)^G$ if and only if
    \begin{align}
        \label{eq:main:+supp1}%
        & \dim\left( \overline x \cap  \supp\left( H^k(\dualize F) \right) \right) \leq  -p(x) - k &&  \text{for all $x \in  \Xtop$ and all $k$}. \\
        \intertext{Using Lemma~\ref{lem:supportAndLocalCohomology+} for $\lc Z\sheaf F \in  D^{\geq 0}(X)$, we see that we have to show the equivalence of \eqref{eq:main:+supp1} with}
        \notag
        & \dim\left( Z \cap  \supp\left( H^k(\dualize F) \right) \right) \leq  - k && \text{ for all $k$ and all $Z \in  \measuringFam$}. \\
        \intertext{Since there are only finitely many orbits, this is in turn equivalent to}
        \label{eq:main:+supp2}%
        & \dim\left( Z \cap  \overline x \cap  \supp\left( H^k(\dualize F) \right) \right) \leq  - k && \text{ $\forall\, x \in  \Xtop$, $k$ and $Z \in  \measuringFam$}.
    \end{align}
    We will show the equivalence for each fixed $k$ separately.
    Let us first show the implication from \eqref{eq:main:+supp1} to \eqref{eq:main:+supp2}.
    Since $H^k(\dualize \sheaf F)$ is $G$-equivariant and there are only finitely many $G$-orbits, it suffices to show \eqref{eq:main:+supp2} assuming that $\dim x \le \dim \supp H^k(\dualize F)$ and $\overline x \cap \supp H^k(\dualize F) \ne \emptyset$.
    Then $\dim\left(\overline x \cap  \supp\left( H^k(\dualize F) \right)\right) = \dim \overline x$.
    Thus,
    \begin{multline*}
        \dim\left(Z \cap  \overline x \cap  \supp\left( H^k(\dualize F) \right) \right) \le
        \dim(Z \cap  \overline x) =
        p(x) + \dim x = \\
        p(x) + \dim\left(\overline x \cap  \supp\left( H^k(\dualize F) \right)\right) \le
        p(x) - p(x) - k
        = -k.
    \end{multline*}
    
    Conversely, assume that \eqref{eq:main:+supp2} holds for $k$.
    If $\overline x \cap \supp H^k(\dualize F) = \emptyset$, then \eqref{eq:main:+supp1} is trivially true.
    Otherwise choose a $p$-measuring $Z$ that intersects $\supp H^k(\dualize F)$.
    First assume that $\overline x$ is contained in $\supp H^k(\dualize F)$.
    Then
    \begin{multline*}
        \dim\left(\overline x \cap  \supp\left( H^k(\dualize F) \right)\right) =
        \dim x =
        -p(x) + \dim(Z \cap  \overline x) = \\
        -p(x) + \dim\left(Z \cap  \overline x \cap  \supp\left( H^k(\dualize F) \right) \right) \le
        -p(x) - k.
    \end{multline*}
    Otherwise $\overline x \cap  \supp\left( H^k(\dualize F) \right) = \overline y$ for some $y \in  \Xtop$ with $\dim y < \dim x$.
    Then \eqref{eq:main:+supp1} holds for $y$ in place of $x$ and hence
    \[
    \dim\left( \overline x \cap  \supp\left( H^k(\dualize F) \right) \right) =
    \dim\left( \overline y \cap  \supp\left( H^k(\dualize F) \right) \right) \leq 
    -p(y) - k \leq 
    -p(x) - k
    \]
    by monotonicity of $p$.
    \qedhere
\end{enumerate}
\end{proof}

\begin{Ex}
    For the dual standard perversity $p(n) = -n$ (i.e.\ $p(x) = -\dim x$), we recover the definition of Cohen-Macaulay sheaves~\cite[Section~IV.3]{Hartshorne:1966:ResiduesAndDuality}.
\end{Ex}

Of course, for the theorem to have any content, one needs to show that $X$ has enough $p$-measuring subvarieties.
The next theorem shows that at least for affine varieties there are always enough measuring subvarieties whenever $p$ satisfies the obvious conditions (see Remark~\ref{rem:existence}).

\begin{Thm}\label{thm:existance}%
    Assume that $X$ is affine and the perversity $p$ is monotone and comonotone and satisfies $-n \le p(n) \le 0$ for $n \in  \{0,\dotsc,\dim X\}$.
    Then $X$ has enough $p$-measuring subvarieties.
\end{Thm}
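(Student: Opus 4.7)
The plan is to construct a single subvariety $Z$ of $X$ that meets every orbit closure $\overline x$ and is $p$-measuring; then $\{Z\}$ is already a $p$-measuring family. Writing $X = \Spec A$ and setting $c = -p(\dim X)$ (note $0 \le c \le \dim X$ by hypothesis), I would produce $Z = V(f_1, \dots, f_c)$ for carefully chosen $f_1, \dots, f_c \in A$. The target property is that for every $x \in \Xtop$ the intersection $V(f_1, \dots, f_c) \cap \overline x$ has codimension exactly $c_x := -p(\dim x)$ in $\overline x$ and is globally cut out there by the $c_x$ restrictions that are not identically zero.

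I would build the $f_j$ inductively, maintaining that $V(f_1, \dots, f_j) \cap \overline x$ has codimension $\min(j, c_x)$ in $\overline x$ for every $x \in \Xtop$. At step $j$ this forces $f_j$ to
\begin{itemize}
    \item lie in the vanishing ideal of every $\overline x$ with $c_x < j$ (so ``finished'' orbits stay finished), and
    \item avoid every irreducible component of $V(f_1, \dots, f_{j-1}) \cap \overline x$ for those $x$ with $c_x \ge j$ (so active orbit closures are cut one more codimension).
\end{itemize}
Existence of such an $f_j$ is an application of prime avoidance, provided no active component lies inside the union $U_j = \bigcup_{c_x < j} \overline x$. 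To secure this, I would carry along the extra inductive invariant that every component of an active intersection has its generic point in the corresponding open orbit. Monotonicity of $p$ then shows that any $\overline{x'} \subseteq U_j$ with $c_{x'} < j \le c_x$ satisfies $\dim x' < \dim x$, so a component that meets the open orbit of $\overline x$ cannot be contained in $\overline{x'}$. Comonotonicity of $p$ and the bound $p(n) \ge -n$ are what keep the numerical constraints on the different $\overline x$'s mutually compatible as $j$ grows.

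The verification is automatic once the construction is in place: on each $\overline x$, exactly $c_x$ of the restrictions $f_j|_{\overline x}$ are nonzero (those with $j \le c_x$), and together they cut out a subset of codimension $c_x$ in $\overline x$. This makes $V(f_1, \dots, f_c) \cap \overline x$ a global set-theoretic complete intersection of the correct codimension, which in particular is a set-theoretic local complete intersection in $\overline x$; thus $Z$ is $p$-measuring and meets every $\overline x$.

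The main obstacle is the Bertini-type step inside the inductive construction: showing that a generic $f_j$ in the vanishing ideal of $U_j$ preserves the invariant on components, rather than producing new components swallowed inside smaller orbit closures. The affineness of $X$ is essential here because it supplies enough regular functions for this generic choice to exist and for the prime-avoidance argument to go through. The rest of the work is bookkeeping to confirm that monotonicity, comonotonicity and $-n \le p(n) \le 0$ together rule out every numerical obstruction to the inductive step.
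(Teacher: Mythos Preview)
Your approach is essentially the paper's: both build $Z$ as an iterated hypersurface section $V(f_1,\dots,f_c)$, choosing each $f_j$ by prime avoidance to vanish on the orbit closures already at their target codimension while properly cutting the rest; your index $j$ simply reparametrizes the paper's induction on $d$, with a new function added exactly at the dimensions where $p$ drops. One remark: the ``Bertini-type obstacle'' you flag is not actually an obstacle---for any $x$ that remains active after step $j$ (i.e.\ $c_x>j$), every component of $V(f_1,\dots,f_j)\cap\overline x$ has dimension $\dim x-j$, and a short case check (using $\dim x'<\dim x$ when $c_{x'}>j$, and comonotonicity when $c_{x'}\le j$) shows this strictly exceeds $\dim\bigl(V(f_1,\dots,f_j)\cap\overline{x'}\bigr)$ for every boundary orbit $x'$, so your extra invariant is preserved automatically by \emph{any} $f_j$ meeting the prime-avoidance conditions, with no genericity needed.
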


\begin{proof}
    Let $X = \Spec A$.
    We induct on the dimension $d$.
    More precisely, we induct on the following statement.
    \begin{quote}
        There exists a closed subvariety $Z_d$ of $X$ such that for all $x \in  \Xtop$ the following hold:
        \begin{itemize}
            \item $Z_d \cap \overline x \ne \emptyset$ and $Z_d \cap \overline x$ is a set-theoretic local complete intersection in $\overline x$.
            \item If $\dim x \le d$, then $\dim(\overline x \cap  Z_d) = p(x) + \dim x$.
            \item If $\dim x > d$, then $\dim(\overline x \cap  Z_d) = p(d) + \dim x$.
        \end{itemize}
    \end{quote}
    We set $p(-1) = 0$.
    The statement is trivially true for $d = -1$, e.g.~take $Z = X$.
    Assume that the statement is true for some $d \ge -1$.
    We want to show it for $d+1 \le \dim X$.

    If $p(d) = p(d+1)$, then $Z_{d+1} = Z_{d}$ works.
    Otherwise, by (co)monotonicity, $p(d+1) = p(d) - 1$.
    Set $S = \bigcup \{ \overline x \in  \Xtop : \dim x \le d\}$.
    Since there are only finitely many orbits, we can choose a function $f$ such that $f$ vanishes identically on $S$, $V(f)$ does not share a component with $Z_d$ and $V(f)$ intersects every $\overline x$ with $\dim x > d$.
    Then $Z_{d+1} = Z_d \cap V(f)$ satisfies the conditions.
\end{proof}

\section*{Appendix. Constructible sheaves}

We return now to the claim about exact functors on the t-structure of constructible perverse sheaves made in the introduction.
Let $X$ be a complex manifold and $\setset S$ a finite stratification of $X$ by complex submanifolds.
We write $D^b_{\setset S}(X)$ for the bounded derived category of $\setset S$-constructible sheaves on $X$.
We call a sheaf $\sheaf F \in  \smash[t]{D^b_{\setset S}(X)}$ perverse if it is perverse with respect to the middle perversity function on $\setset S$.
We are going to formulate and prove an analog of Theorem~\ref{thm:main} in this situation.

A closed real submanifold $Z$ of $X$ is called a \emph{measuring submanifold} if for each stratum $S$ of $X$ either $Z \cap  \overline S = \emptyset$ or $\dim_\mathbb{R} Z \cap  S = \dim_\mathbb{C} S$.
A \emph{measuring family} is a collection of measuring submanifolds $\{ Y_i \}$ such that each connected component of each stratum has non-empty intersection with at least one $Y_i$.
Similarly to Theorem~\ref{thm:existance}, one shows inductively that such a collection of submanifolds always exists.

\begin{Thm}
    Let $\measuringFam$ be a measuring family of submanifolds of $X$.
    A sheaf $\sheaf F \in  D^b_{\setset S}(X)$ is perverse if and only if $i_Z^! \sheaf F$ is concentrated in cohomological degree $0$ for each submanifold $Z \in  \measuringFam$.
\end{Thm}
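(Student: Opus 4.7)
The plan is to mirror the proof of Theorem~\ref{thm:main}, with the geometric input being that a measuring submanifold $Z$ meets each stratum $S$ of complex dimension $d_S$ in a real submanifold of real codimension exactly $d_S$ inside $S$. Since the middle perversity places a local system on $S$ into cohomological degree $-d_S$, and $i_{Z\cap S}^!$ for the smooth codimension-$d_S$ embedding shifts it by $+d_S$, the two shifts cancel and we land in degree $0$. This is the constructible analogue of the codimension bookkeeping that makes Lemmas~\ref{lem:supportAndLocalCohomology-} and~\ref{lem:supportAndLocalCohomology+} work in the coherent case.

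First I recall the standard characterization of the middle perversity: $\sheaf F$ is perverse iff for every stratum $S$ one has $i_S^* \sheaf F \in D^{\le -d_S}$ and $i_S^! \sheaf F \in D^{\ge -d_S}$. The key local input is then that for a locally constant sheaf $\sheaf L$ in degree $0$ on $S$, smoothness of both $S$ and $Z \cap S$ gives
\[
    i_{Z \cap S}^! \sheaf L \simeq \sheaf L|_{Z \cap S} \otimes \omega_{(Z \cap S)/S}[-d_S],
\]
concentrated in cohomological degree $d_S$. For the ``only if'' direction I would then filter $\sheaf F$ by the stratification, using the distinguished triangles $j_! j^* \sheaf F \to \sheaf F \to i_* i^* \sheaf F \xrightarrow{+1}$ arising from the inclusion of the open union of top-dimensional strata. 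Each successive graded piece is supported on a single stratum, and the local computation together with the perverse bounds shows that $i_Z^!$ of each piece is concentrated in degree $0$; the long exact sequences of cohomology then propagate this to $i_Z^! \sheaf F$.

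For the ``if'' direction I would argue contrapositively: if $\sheaf F$ is not perverse, I pick a stratum $S$ of minimal dimension on which a perverse bound fails and choose $Z \in \measuringFam$ meeting the relevant connected component. By minimality the contributions to $i_Z^!\sheaf F$ from deeper strata are already cohomologically concentrated in degree $0$, so the ``bad'' cohomology at $S$ must survive in $i_Z^! \sheaf F$ outside degree $0$, producing the desired contradiction.

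The main obstacle will be controlling the boundary contributions near a point $p \in Z \cap S$ from strata inside $\overline S \setminus S$: the intersections of $Z$ with those deeper strata typically fail to be transverse (their real dimensions are larger than naive transversality would predict), so direct dimension counting does not suffice. Ruling out unwanted cancellations will either need a microlocal argument based on the fact that a measuring $Z$ is non-characteristic for $\setset S$-constructible perverse sheaves (its conormal $N^*Z$ meets the singular support $SS(\sheaf F) \subseteq \bigcup_S T^*_S X$ only along the zero section), or a direct induction on stratum dimension patterned on part~\ref{li:prop:equivDeligneKashiwara:ge} of Proposition~\ref{prop:equivDeligneKashiwara} that tracks perverse bounds on $S$ and on its boundary strata simultaneously.
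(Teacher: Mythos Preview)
Your ``only if'' argument has a genuine gap: the claim that each graded piece of the filtration $j_!j^*\sheaf F \to \sheaf F \to i_*i^*\sheaf F$ has $i_Z^!$ concentrated in degree $0$ is false. For $\sheaf F$ perverse one only knows $i^*\sheaf F \in \perv D^{\le 0}(F)$, not that it lies in the heart, so the local computation and the perverse bound together give at best $i_Z^!(i_*i^*\sheaf F) \in D^{\le 0}$. Concretely, take $X=\mathbb C$ stratified by $\{0\}$ and $\mathbb C^*$, $\sheaf F=\underline{\mathbb C}_X[1]$, and $Z=\mathbb R$: then $i_*i^*\sheaf F=\underline{\mathbb C}_0[1]$ and $i_Z^!\underline{\mathbb C}_0[1]$ sits in degree $-1$, so the long exact sequence only yields $i_Z^!\sheaf F\in D^{[-1,0]}$. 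The obstacle you flag at the end (boundary behaviour of $i_Z^! j_!$) is real but secondary; the primary error is conflating the one-sided perverse bound with concentration in a single degree.

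The missing idea, which is how the paper proceeds, is to treat the two halves of the t-structure separately and match the recollement triangle to the direction, inducting on the number of strata. For $\perv D^{\ge 0}\subseteq\perv[L]D^{\ge 0}$ one uses the \emph{other} triangle $i_*i^!\sheaf F\to\sheaf F\to j_*j^*\sheaf F$: here $i^!\sheaf F\in\perv D^{\ge 0}(F)$ and $j^*\sheaf F\in\perv D^{\ge 0}(U)$, so induction together with base change and left-exactness of $i_*,j_*$ give $i_Z^!\sheaf F\in D^{\ge 0}$. For $\perv D^{\le 0}\subseteq\perv[L]D^{\le 0}$ the paper instead follows the coherent argument via the support characterization and the codimension bound of Lemma~\ref{lem:constructible_local_vanishing}, and the two converse inclusions are then read off from the same triangles. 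Your contrapositive sketch for the ``if'' direction has the mirror problem: minimality for one half of the t-structure does not control the other, so again you must split into $\le 0$ and $\ge 0$ rather than argue with a single filtration.
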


The proof of the following lemma is based on a MathOverflow post by Geordie Williamson \cite{MO:VanishingShriekRestrictionConstructible}.
The author takes responsibility for possible mistakes.

\begin{Lem}\label{lem:constructible_local_vanishing}%
    Let $X$ be a real manifold, $\sheaf F$ be a constructible sheaf (concentrated in degree $0$) on $X$ and let $i\colon Z \hookrightarrow X$ be the inclusion of a closed submanifold.
    Then $H^j(i^!\sheaf F) = 0$ for $j>\codim_XZ$.
\end{Lem}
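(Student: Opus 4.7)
My plan is to work locally at a point $z \in Z$ and reduce the problem to the cohomology of a deleted neighborhood. If $c := \codim_X Z = 0$ then $i$ is an open inclusion and the claim is trivial, so assume $c \geq 1$. Let $j\colon X\setminus Z \hookrightarrow X$ denote the open complement of $Z$. Applying $i^*$ to the standard distinguished triangle $i_* i^!\sheaf F \to \sheaf F \to Rj_*j^*\sheaf F \xrightarrow{+1}$ yields
\[
i^!\sheaf F \to i^*\sheaf F \to i^* Rj_* j^* \sheaf F \xrightarrow{+1}.
\]
Since $\sheaf F$ sits in degree $0$, so does $i^*\sheaf F$, and the associated long exact sequence of cohomology shows that for $n > c$,
\[
H^n(i^!\sheaf F)_z \cong H^{n-1}(i^* Rj_* j^* \sheaf F)_z = \varinjlim_{V \ni z} H^{n-1}(V \setminus Z; \sheaf F).
\]
Hence it suffices to prove $H^m(V \setminus Z; \sheaf F) = 0$ for all $m \geq c$ and sufficiently small open $V \ni z$.

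For this local vanishing, I would choose a Whitney stratification of $X$ adapted to $\sheaf F$ and refined so that $Z$ is a union of strata; this is standard in the subanalytic setting, and a local refinement handles a general smooth submanifold. By the Thom--Mather isotopy theorem, shrink $V$ to a conic open neighborhood of $z$ compatible with this stratification, so that $V \setminus Z$ stratifiedly deformation retracts onto the link of $z$ in $X$ with its portion in $Z$ removed. The latter parametrizes the normal directions to $Z$ at $z$ and is therefore a stratified topological space of real dimension $c - 1$.

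Finally, I would invoke the standard cohomological dimension bound of Kashiwara--Schapira \cite{KashiwaraSchapira:1994:SheavesOnManifolds}: the cohomology of a constructible sheaf on a stratified space vanishes above its topological dimension. Applied to $V \setminus Z$, this gives $H^m(V \setminus Z; \sheaf F) = 0$ for $m > c - 1$, as required. \textbf{The main obstacle} is justifying the dimensional collapse from the ambient $\dim X - 1$ down to $c - 1$ via the conic neighborhood structure, and arranging a stratification compatible with both $\sheaf F$ and the smooth submanifold $Z$; both are routine in the Whitney/subanalytic framework but need local care in general.
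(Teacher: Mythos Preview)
Your reduction to the vanishing of $H^{m}(V\setminus Z;\sheaf F)$ for $m\ge c$ is correct and is exactly how the paper begins, but the step where you collapse the dimension has a genuine gap. The link of $z$ in $X$ with its portion in $Z$ removed is, in local coordinates, $S^{\dim X-1}\setminus S^{\dim Z-1}$, and this has real dimension $\dim X-1$, not $c-1$. It is only \emph{homotopy equivalent} to the normal sphere $S^{c-1}$, and cohomology with constructible coefficients is not a homotopy invariant, so the cohomological-dimension bound you invoke only yields vanishing above degree $\dim X-1$. Nor is there in general a stratified deformation retract onto $S^{c-1}$: take $X=\mathbb R^3$, $Z$ the $z$-axis, and $\sheaf F$ the pushforward of the constant sheaf on the $xz$-plane $P$; the stratum $P\setminus Z$ meets the link $S^{2}\setminus\{N,S\}$ in two arcs, which cannot be carried by a stratum-preserving isotopy onto their two-point intersection with the equatorial $S^{1}$.

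The paper closes precisely this gap by first taking a \emph{normal slice} to $Z$ at $z$, reducing to the case where $Z$ is a single point and hence $c=\dim X$; then the link of $z$ genuinely has dimension $c-1$ and \cite[Lemma~8.4.7]{KashiwaraSchapira:1994:SheavesOnManifolds} gives the required vanishing. The ``dimensional collapse from $\dim X-1$ down to $c-1$'' that you flag as the main obstacle is \emph{not} routine inside the link itself, but becomes automatic once one passes to the normal slice (on which $\sheaf F$ restricts to a constructible sheaf and $Z$ restricts to a point). After that reduction your argument and the paper's coincide.
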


\begin{proof}
    By taking normal slices we can reduce to the case that $Z = \{z\}$ is a point.
    Let $j$ be the inclusion of $X \setminus \{z\}$ into $X$ and consider the distinguished triangle
    \[
        i_!i^! \sheaf F \to  \sheaf F \to  j_*j^*\sheaf F \xrightarrow{+1}.
    \]
    By~\cite[Lemma~8.4.7]{KashiwaraSchapira:1994:SheavesOnManifolds} we have
    \[
        H^j(j_*j^*\sheaf F)_z = H^j(S^{\dim X - 1}_\epsilon , \sheaf F)
    \]
    for a sphere $S^{\dim X - 1}_\epsilon $ around $z$ of sufficiently small radius.
    The latter cohomology vanishes for $j \ge \dim X$ and hence $H^j(i^! \sheaf F) = 0$ for $j>\dim X$ as required.
\end{proof}

\begin{proof}[Proof of Theorem]
    Define two full subcategories $\perv[L] D^{\le 0}(X)$ and $\perv[L] D^{\ge 0}(X)$ of $D^b_{\setset S}(X)$ by
    \begin{align*}
        \perv[L] D^{\leq 0}(X) & = \left\{ \sheaf F \in  D^b_{\setset S}(X) : i_Z^! \sheaf F \in  D^{\leq 0}(Z) \text{ for all $Z \in  \measuringFam$} \right\}, \\
        \perv[L] D^{\geq 0}(X) & = \left\{ \sheaf F \in  D^b_{\setset S}(X) : i_Z^! \sheaf F \in  D^{\geq 0}(Z) \text{ for all $Z \in  \measuringFam$} \right\}.
    \end{align*}
    We will show that these categories are the same as the categories $\perv D^{\leq 0}(X)$ and $\perv D^{\geq 0}(X)$ defining the perverse t-structure on $D^b_{\setset S}(X)$.

    We induct on the number of strata.
    If $X$ consists of only one stratum and $Z$ is a measuring submanifold, then $i_Z^! \sheaf F \cong \omega _{Z/X} \otimes i_Z^* \sheaf F$ and hence $i_Z^! \sheaf F$ is in degree $0$ if and only if $\sheaf F$ is in degree $-\frac 12 \dim_\mathbb{R} X$.
    So assume that $X$ has more then one stratum.
    Without loss of generality we can assume that $X$ is connected.
    Let $U$ be the union of all open strata and $F$ its complement.
    Both $U$ and $F$ are unions of strata of $X$.
    Let $j$ be the inclusion of $U$ and $i$ the inclusion of $X$. 
    
    \begin{itemize}
        \item 
            If $\sheaf F \in  \perv D^{\leq 0}(X)$, then $\sheaf F \in  \perv[L] D^{\leq 0}$ follows in exactly the same way as in the coherent case, using Lemma~\ref{lem:constructible_local_vanishing}.
        \item 
            Let $\sheaf F \in  \perv D^{\geq 0}(X)$.
            Then $i^! \sheaf F \in  \perv D^{\geq 0}(F)$ and $j^*\sheaf F \in  \perv D^{\geq 0}(U)$.
            Let $Z$ be a measuring subvariety.
            Consider the distinguished triangle 
            \[ 
                i_*i^! \sheaf F \to  \sheaf F \to  j_*j^*\sheaf F.
            \]
            Using base change, induction and the (left)-exactness of the push-forward functors one sees that $i_Z^!$ of the outer sheaves in the triangle are concentrated in non-negative degrees.
            Thus so is $i_Z^! \sheaf F$.
        \item 
            Let $\sheaf F \in  \perv[L] D^{\geq 0}(X)$.
            Since all measurements are local this implies that $j^* \sheaf F \in  \perv[L] D^{\geq 0}(U) = \perv D^{\geq 0}(U)$.
            Using the same triangle and argument as in the last point, this implies that also $i^! \sheaf F \in  \perv[L] D^{\geq 0}(F) = \perv D^{\geq 0}(F)$.
            Hence, by recollement, $\sheaf F \in  \perv D^{\geq 0}(X)$.
        \item 
            Finally, let $\sheaf F \in  \perv[L] D^{\leq 0}(X)$.
            Again this immediately implies that $j^* \sheaf F \in  \perv[L] D^{\leq 0}(U) = \perv D^{\leq 0}(U)$.
            Thus $j_!j^*\sheaf F \in  \perv D^{\leq 0}(X)$.
            Let $Z$ be a measuring submanifold and consider the distinguished triangle
            \[
                i_Z^! j_!j^*\sheaf F \to  i_Z^! \sheaf F \to  i_Z^! i_*i^* \sheaf F.
            \]
            By what we already know, the first sheaf is concentrated in non-positive degrees and hence so is $i_Z^! i_*i^* \sheaf F$.
            By base change and the exactness of $i_*$ this implies that $i^* \sheaf F \in  \perv[L] D^{\leq 0}(F) = \perv D^{\leq 0}(F)$.
            Hence, by recollement, $\sheaf F \in  \perv D^{\leq 0}(X)$.
            \qedhere
    \end{itemize}
\end{proof}

\begin{Rem}
    The equality $\perv D^{\geq 0}(X) = \perv[L]D^{\geq 0}(X)$ could also be proved in exactly the same way as in the coherent case, using~\cite[Exercise~X.10]{KashiwaraSchapira:1994:SheavesOnManifolds}.
\end{Rem}

\printbibliography

\end{document}